\newcommand{\Argmin}[1]{{\operatorname{argmin}}_{#1}\;}
\newcommand{\argmax}[1]{\underset{#1}{\operatorname{argmax}}\;}
\newcommand{\argmin}[1]{\underset{#1}{\operatorname{argmin}}\;}
\newtheorem{thm}{Theorem}
\newtheorem{prop}[thm]{Proposition}
\long\def\Lcal{\mathcal{L}}
\title{Half-quadratic transportation problems}
\author{Mariano Rivera$^{1,2}$ \\ \\
$^{1}$Centro de Investigacion en Matematicas AC, Guanajuato, Gto. 36000, Mexico \\
$^{2}$ Instituto Potosino de Ciencia y Tencnologia AC, San Luis Potosi, SLP. 78216, Mexico}
\date{}                                          
\begin{document}
\maketitle

\begin{abstract}

We present a primal--dual memory efficient algorithm for solving a relaxed version of the general transportation problem. Our approach approximates the original cost function with a differentiable one that is solved as a sequence of weighted quadratic transportation problems. The new formulation allows us to solve differentiable, non--convex transportation problems.

Keywords: General transportation problem, Half--quadratic potentials,  Sequential quadratic programming.

\end{abstract}



\section{Introduction}

The general transportation problem (GTP) deals with the distribution of goods from $m$ suppliers with production capacities $p = \{ p_i\}_{i=1,\ldots,m}$ to $n$ destinations with demands $q = \{ q_i\}_{i=1,\ldots,n}$.  Without loss of generality, we assume balanced production and demand: $\sum_i p_i = \sum_j q_j$. A classical approach to this problem assumes that the cost of transport remains constant, independently of the the quantity to be transported. In real problems, this is not the case. The cost may increase or decrease according the volume of the transported good. We can write the general transportation problem as follows:
\begin{eqnarray}
	\textstyle \min_x && \textstyle\sum_{i,j} f_{ij}(x_{ij}) \label {eq:gtp1}\\
	\mbox{s.t.}  && \textstyle\sum_j  x_{ij}  = p_i \label {eq:gtp2}\\
	&& \textstyle \sum_i  x_{ij} = q_j    \label {eq:gtp3} \\
	&& x_{ij} \ge 0 \label {eq:gtp4}.
\end{eqnarray}
where $x_{ij}$ denotes the quantity of goods to be transported from the $i$th supplier to the $j$th destination, $f: [ m \times n ] \times \mathbb{R^+}\rightarrow \mathbb{R^+}$ is a continuous cost function that depends on the supplier $i$, the  destination $j$ and the volume $x_{ij}$. 

The first case of a transportation problem was formulated by Hitchcock assuming that the cost functions are linear: $f_{ij}(x_{ij}) = c_{ij}x_{ij}$ \cite{Hitchcock41}. Another popular model is the quadratic one: $ f_{ij}(x_{ij}) = a_{ij}x_{ij}^2 + b_{ij}x_{ij}$.  According to Ref. \cite{adlakha13}, the quadratic model is popular because it can approximate other cost functions. Despite such flexibility, the limitation of quadratic models has been well documented in the context of robust statistics, and its applications to image processing and computer vision \cite{gemanyang95,charbonnier97}. In our opinion, the main limitations of quadratic models are the following:

\begin{enumerate}

\item The impossibility of limiting the cost for large values;  \emph{i.e.},  one has $\lim_{x \rightarrow +\infty} |q_{ij}(x_{ij}) | = \infty$. 

\item The limitation of promoting sparse solutions; \emph{i.e.}, solutions that use a reduced number of routes.

\end{enumerate}

In this work, we present an approximation scheme that allows us to define new cost functions that overcome the aforementioned limitations. We also present a primal--dual algorithm with limited memory requirements.
 
The GTP is relevant in modern computer science applications such as computer vision \cite{rubner:EMD00}, machine learning \cite{baccianella13:emd} and data analysis \cite{levina:emdmallows01,kusner:emd15}. The Earth Mover Distance (EMD) is an interesting application of the transportation problem where the optimum cost is used as a metric between the histograms, vectors, $p$ and $q$.  In Ref. \cite{rubner:EMD00}, EMD is used as a metric for image retrieval in the context of computer vision. Recently,  the EMD was proposed as a measure of reconstruction error for non--negative matrix factorisation \cite{zen:emd14}. The Word Moving Distance is the metric version for comparing documents based on the transportation problem \cite{kusner:emd15}. Recent EMD applications include the quantification of biological differences in flow cytometry samples \cite{orlova:emd16}.
In addition, there is current interest in the learning of metrics for particular problems \cite{cuturi14}; since our proposal is parametrised, the parameters involved can be learned.

\section{Preliminaries}

Before presenting our transportation formulation, we review an important result reported in the context of robust statistics and continuous optimisation applied to image processing \cite{charbonnier97}. The purpose of such work was to transform some non--linear cost functions to half--quadratic functions \cite{gemanyang95,charbonnier97}. A half--quadratic function is quadratic in the original variable and convex in a new auxiliary variable, where the minima of the auxiliar variable can be computed with a closed formula. The next proposition resumes the conditions imposed on the cost function $f$ and the transformed half--quadratic function.

\begin{prop} \label{prop:fhq} Let $f: \mathbb{R^+} \rightarrow \mathbb{R^+}$ be a function that fulfils the following conditions:

\begin{enumerate}

\item $f(t) \ge m$ with $f(0) =m$, for $t\ge 0$ and $ m > -\infty$.

\item $f$ is continuously differentiable.

\item $f'(t) \ge 0$.

\item $\lim_{t \rightarrow +\infty }f'(t)/(2t) =0$.

\item $\lim_{t \rightarrow +0^+ }f'(t)/(2t) =M$, $0<M<+\infty$.

\end{enumerate}

Then, 

\begin{enumerate}

\item there exists a strictly convex and decreasing function  $\psi:(0,M]\rightarrow [0, \beta)$, where 
$
\beta = \textstyle \lim_{t\rightarrow+\infty} \left\{ f(t) - t^2 f'(t) /(2t)  \right\}
$ such that 
\begin{equation}
	\label{eq:hqf}
	f(t) = \inf_{0<\omega \le M} \left\{ \omega t^2 + \psi( \omega) \right\};
\end{equation}

\item the solution to $\inf_{0<\omega \le M} \{\omega t^2 + \psi( \omega)\} $
is unique and given by
\begin{equation}
 	\label{eq:omega}
 	\omega^* = f'(t) / (2t) . 
\end{equation}
\end{enumerate}

\end{prop}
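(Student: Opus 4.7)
I would recognize (\ref{eq:hqf}) as a Legendre--Fenchel-type transform in the variable $s = t^2$. Set $\omega^{\ast}(t) := f'(t)/(2t)$ for $t>0$. Conditions 4 and 5 give $\omega^{\ast}(t)\to 0$ as $t\to +\infty$ and $\omega^{\ast}(t)\to M$ as $t\to 0^{+}$; assuming (as is implicit in the statement, since $\psi$ must be strictly convex on all of $(0,M]$) that $\omega^{\ast}$ is strictly monotone decreasing, it admits an inverse $t(\cdot):(0,M]\to[0,+\infty)$, with $t(M)=0$ and $t(\omega)\to+\infty$ as $\omega\to 0^{+}$. Define the candidate dual function
\begin{equation*}
    \psi(\omega) \;:=\; f\bigl(t(\omega)\bigr) \;-\; \omega\, t(\omega)^{2},\qquad \omega\in(0,M],
\end{equation*}
so that, by construction, $\omega t^{2}+\psi(\omega)=f(t)$ whenever $\omega=\omega^{\ast}(t)$.

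Next I would verify the stated properties of $\psi$ by differentiating. Using the chain rule and the identity $f'(t)=2\omega^{\ast}(t)\,t$,
\begin{equation*}
    \psi'(\omega) \;=\; \bigl[f'(t(\omega)) - 2\omega\, t(\omega)\bigr]\,t'(\omega) \;-\; t(\omega)^{2} \;=\; -\,t(\omega)^{2}.
\end{equation*}
This is strictly negative, so $\psi$ is strictly decreasing; since $t(\omega)$ is itself strictly decreasing in $\omega$, the map $\omega \mapsto -t(\omega)^{2}$ is strictly increasing, which yields strict convexity of $\psi$. For the asserted codomain, $\psi(M)= f(0) - M\cdot 0 = m$ (which is normalized to $0$ by hypothesis 1), and $\psi(\omega)\to \lim_{t\to+\infty}\{f(t)-t\,f'(t)/2\}=\beta$ as $\omega\to 0^{+}$, matching the definition of $\beta$.

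For part (1), fix $t>0$ and set $G(\omega):=\omega t^{2}+\psi(\omega)$ on $(0,M]$. Then $G'(\omega)=t^{2}-t(\omega)^{2}$, which vanishes at precisely $\omega=\omega^{\ast}(t)$; strict convexity of $G$ (inherited from $\psi$) promotes this critical point to a unique global minimum. Evaluating gives $G(\omega^{\ast}(t)) = \omega^{\ast}(t)\,t^{2} + f(t) - \omega^{\ast}(t)\,t^{2} = f(t)$, which is (\ref{eq:hqf}). Part (2) then follows immediately from the uniqueness of this minimizer, together with $\omega^{\ast}(t)=f'(t)/(2t)$, yielding formula (\ref{eq:omega}).

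\textbf{Expected obstacle.} The main subtlety is that invertibility of $\omega^{\ast}$ requires strict monotonicity, which is not explicitly among the five listed hypotheses; a careful writeup must either derive it from a latent convexity/regularity assumption on $f$ or note it as a consequence of the asserted strict convexity of $\psi$. A secondary nuisance is the boundary accounting at $\omega=M$ and as $\omega\to 0^{+}$, where $t(\omega)$ degenerates to $0$ or blows up, forcing the identities above to be interpreted as limits and the normalization $m=0$ (or an additive shift) to be implicitly used so that the codomain of $\psi$ is indeed $[0,\beta)$.
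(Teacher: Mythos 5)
Your situation is unusual: the paper offers no argument to compare against --- its ``proof'' of Proposition~\ref{prop:fhq} is a one-line citation to \cite{charbonnier97}. What you have written is, in substance, a reconstruction of the proof in that reference: the half-quadratic identity is a concave-conjugate (Legendre-type) transform in the variable $s=t^2$, with $\psi$ obtained by inverting $\omega^{\ast}(t)=f'(t)/(2t)$, the envelope identity $\psi'(\omega)=-t(\omega)^2$ giving monotonicity and strict convexity, and the stationarity condition $t^2+\psi'(\omega)=0$ recovering \eqref{eq:omega}. So your route and the cited one coincide, and your sketch is correct modulo the issue you yourself flag.

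That flagged obstacle deserves to be stated more strongly: it is a genuine gap \emph{in the paper's restatement of the proposition}, not merely a nuisance in your writeup. The original theorem in \cite{charbonnier97} explicitly assumes that $f'(t)/(2t)$ is continuous and \emph{strictly decreasing}; the five conditions reproduced here replace this with the weaker $f'(t)\ge 0$ plus the two limit conditions, which do not imply monotonicity. Moreover the hypothesis is necessary, not just convenient for the construction: for any $\psi$ whatsoever, $t\mapsto\inf_{0<\omega\le M}\{\omega t^2+\psi(\omega)\}$ is an infimum of functions affine in $s=t^2$ and hence concave in $s$, so \eqref{eq:hqf} forces $s\mapsto f(\sqrt{s})$ to be concave, i.e.\ $f'(t)/(2t)$ nonincreasing; and uniqueness of the minimizer in conclusion (2) forces strict decrease (if $\omega^{\ast}(t_1)=\omega^{\ast}(t_2)=\omega_0$ with $t_1\ne t_2$ interior, then $\psi'(\omega_0)=-t_1^2=-t_2^2$, a contradiction). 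Under the hypotheses as literally listed the proposition is false, so your decision to assume strict monotonicity and derive the rest is exactly right. Two smaller technical points: your computation of $\psi'(\omega)$ differentiates $t(\omega)$, which presupposes more regularity than $f\in C^1$ provides; this is avoidable by defining $\psi(\omega)=\sup_{t\ge 0}\{f(t)-\omega t^2\}$ (convex as a supremum of affine functions) and using one-sided envelope bounds. And, as you note, the codomain $[0,\beta)$ silently uses the normalization $m=0$, which condition 1 does not actually assert.
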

\begin{proof}
The proof is presented in Ref. \cite{charbonnier97}. 
\end{proof}

\noindent Observe that our version of the half--quadratic Proposition assumes a non-negativity constraint on the primal variables.

\section{Half--quadratic transportation problem}

\begin{figure}[t]
\centerline{
\includegraphics[width=0.5\linewidth]{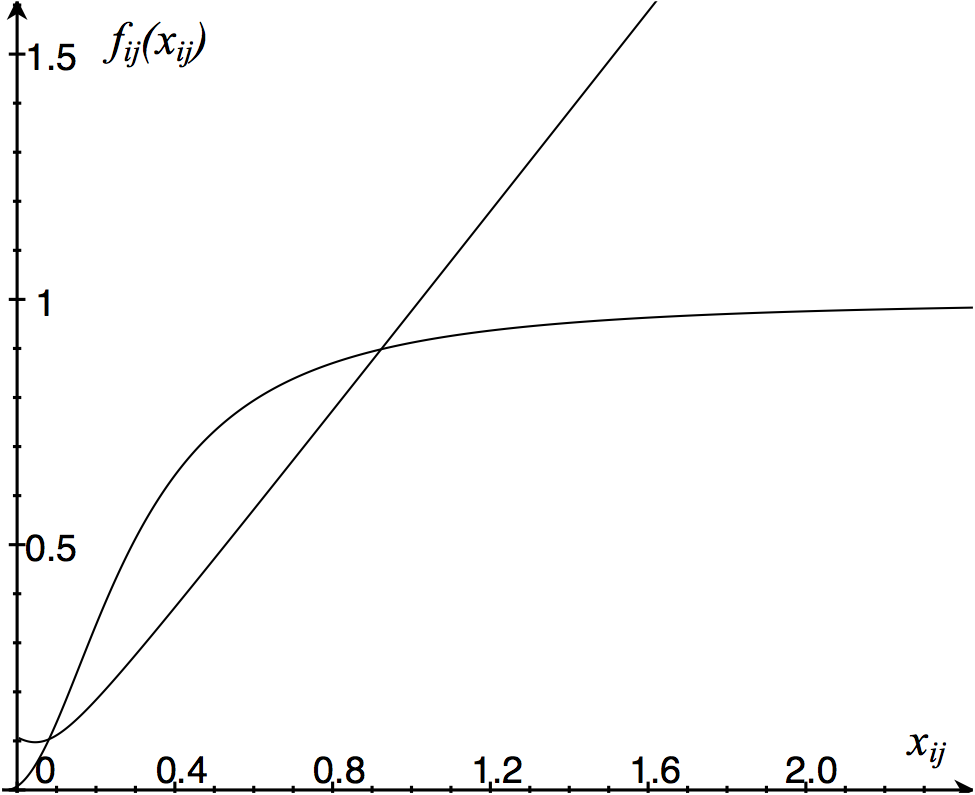}
}
\caption{Plots of half--quadratic cost functions (for $x \ge 0$):  the $L1$--norm approximation $\sqrt{x^2+\beta^2}$ and the $L0$--norm approximation $x^2/ (x^2 + \beta^2)$.}
\label{fig:fig1}
\end{figure}
In this section we present a memory efficient primal--dual algorithm for solving GTP which cost functions satisfy Proposition \ref{prop:fhq}.

\begin{prop} Let $f_{ij}$ be a cost function in \eqref{eq:gtp1}, such that each $f$ satisfies Proposition \ref{prop:fhq}; then, a solution to the transportation problem can be computed with Algorithm \ref{alg:algorithm2}. 
\end{prop}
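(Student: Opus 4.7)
The plan is to recast the GTP as a joint minimization in the enlarged variable set $(x,\omega)$ by applying Proposition~\ref{prop:fhq} coordinate-wise, and then to show that Algorithm~\ref{alg:algorithm2} implements a block-coordinate descent on this enlarged problem whose two subproblems are each tractable: a weighted quadratic transportation problem in $x$ and a closed-form update in $\omega$.

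First I would invoke Proposition~\ref{prop:fhq} on every cost $f_{ij}$ to obtain strictly convex decreasing functions $\psi_{ij}$ and upper bounds $M_{ij}$ such that
\begin{equation*}
f_{ij}(x_{ij}) \;=\; \inf_{0<\omega_{ij}\le M_{ij}} \bigl\{ \omega_{ij}\, x_{ij}^2 + \psi_{ij}(\omega_{ij}) \bigr\}.
\end{equation*}
Since the infimum is taken separately in each $\omega_{ij}$ and the marginal constraints \eqref{eq:gtp2}--\eqref{eq:gtp4} only involve $x$, summing and exchanging the infima with the sum yields the equivalent joint problem
\begin{equation*}
\min_{x\ge 0,\, 0<\omega\le M} \; \sum_{i,j} \bigl\{ \omega_{ij}\, x_{ij}^2 + \psi_{ij}(\omega_{ij}) \bigr\}
\quad \text{s.t.}\quad \textstyle\sum_j x_{ij}=p_i,\; \sum_i x_{ij}=q_j.
\end{equation*}
This reformulation is the crux: any minimizer $(x^*,\omega^*)$ has $x^*$ solving the original GTP, and by part~2 of Proposition~\ref{prop:fhq} the optimal $\omega^*_{ij}$ is recovered from $x^*$ in closed form via $\omega^*_{ij} = f'_{ij}(x^*_{ij})/(2x^*_{ij})$.

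Next I would analyze the two block-subproblems that Algorithm~\ref{alg:algorithm2} alternates. With $\omega$ fixed, the $x$-subproblem is a strictly convex quadratic transportation problem with linear equality and nonnegativity constraints, for which standard primal--dual quadratic transportation solvers apply; this is where the ``memory efficient primal--dual'' component enters, and one only needs to argue that the dual variables associated with the row/column constraints can be iterated cheaply (e.g.\ via Sinkhorn-like or row/column updates) without storing a full $m\times n$ auxiliary matrix. With $x$ fixed, the $\omega$-subproblem decouples across $(i,j)$ and, by Proposition~\ref{prop:fhq}(2), has the unique minimizer $\omega_{ij}=f'_{ij}(x_{ij})/(2x_{ij})$, giving a closed-form update.

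Finally I would establish correctness by a monotonicity argument: each block update cannot increase the joint objective, the objective is bounded below by $\sum_{ij} m_{ij}$ from hypothesis~1 of Proposition~\ref{prop:fhq}, so the sequence of objective values converges; continuity of $f'_{ij}$ and strict convexity of $\psi_{ij}$ together with the compactness of the feasible transportation polytope then yield convergence of accumulation points of $(x^{(k)},\omega^{(k)})$ to stationary points of the joint problem, which by the equivalence above are stationary points of the original GTP. The main obstacle I anticipate is precisely this last step when $f_{ij}$ is non-convex (e.g.\ the $L0$-like potential in Fig.~\ref{fig:fig1}): one can only claim convergence to a local/stationary solution rather than a global one, and care is needed near the boundary $x_{ij}=0$ where $\omega_{ij}$ may hit the bound $M_{ij}$, so the closed-form update must be interpreted as $\omega_{ij}=\min\{M_{ij},\, f'_{ij}(x_{ij})/(2x_{ij})\}$ by continuous extension.
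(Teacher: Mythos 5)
Your outer skeleton coincides with the paper's proof: apply Proposition \ref{prop:fhq} coordinate-wise to rewrite the GTP as a joint minimisation over $(x,\omega)$, alternate the closed-form update $\omega_{ij}=f'_{ij}(x_{ij})/(2x_{ij})$ (the paper's \eqref{eq:kktomega}) with a weighted quadratic transportation subproblem in $x$, and conclude via the descent chain $F(x^k)=\hat F(x^k,\omega^k)\ge \hat F(x^k,\omega^{k+1})\ge \hat F(x^{k+1},\omega^{k+1})=F(x^{k+1})$ plus boundedness below. Your two refinements --- interpreting the $\omega$ update as $\min\{M_{ij},\,f'_{ij}(x_{ij})/(2x_{ij})\}$ near the boundary $x_{ij}=0$, and claiming only stationarity of accumulation points in the non-convex case --- are actually more careful than the paper, which simply asserts a ``feasible convergent sequence'' with decreasing cost.

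The genuine gap is the part that ties the proof to Algorithm \ref{alg:algorithm2} specifically. The proposition asserts that \emph{this} algorithm computes a solution, and its inner loop is not a generic black-box solver: the paper derives it by writing the KKT conditions \eqref{eq:kkth1}--\eqref{eq:kkth5} of the quadratic subproblem with $\omega$ fixed and applying a projected Gauss--Seidel sweep, which yields $x_{ij}=\bar\omega_{ij}(\lambda_i+\gamma_j+s_{ij})$ in \eqref{eq:xhq}, the explicit row/column solves \eqref{eq:lambdahq} and \eqref{eq:gammahq} for $\lambda_i$ and $\gamma_j$, and the complementarity-driven multiplier update $s_{ij}=\max\{0,-(\lambda_i+\gamma_j)\}$ in \eqref{eq:shq}, obtained from the case analysis $x_{ij}=0,\ s_{ij}\ge 0$ versus $x_{ij}\ge 0,\ s_{ij}=0$. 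You replace this derivation with an appeal to ``standard primal--dual quadratic transportation solvers'' and Sinkhorn-like iterations; Sinkhorn scaling solves entropically regularised \emph{linear} transport and does not apply to this quadratic KKT system, so that appeal does not justify these particular updates. To prove the statement as phrased you must either derive the four update formulas from the KKT system as the paper does, or otherwise show that the specific inner loop of Algorithm \ref{alg:algorithm2} solves the subproblem \eqref{eq:minhqtp}; as written, your argument establishes that \emph{some} alternating half-quadratic scheme converges, not that Algorithm \ref{alg:algorithm2} does.
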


{\small
\begin{algorithm}[ht]
\caption{Half-quadratic transportation solver.}  \label{alg:algorithm2}
    \begin{algorithmic}[1]
    	\Require {$\sum_i q_i =\sum_j p_j$ and $f,p,q \ge 0;$ }  
	\Function {hqTP} {$f, p,q$}
	\State  Initialise $\lambda = \gamma = 1$, $\omega_{ij} =c_{ij}$ and use $\bar \omega_{i,j} = 1/ \omega_{ij}$
	\Repeat 
		\Repeat
			\State  Update $s$ with \eqref{eq:shq};
			\State Update $\lambda$ with \eqref{eq:lambdahq};
			\State Update $\gamma$ with \eqref{eq:gammahq};
		\Until{convergence}
		\State Update $x$ with \eqref{eq:xhq};
		\State Update $\omega$ with \eqref{eq:kktomega};
	\Until{convergence}
	\State \Return $[x, \lambda, \gamma]$;	
	\EndFunction
        \end{algorithmic}
\end{algorithm}
}

\begin{proof}

\begin{enumerate}

\item On the half quadratic transportation problem. By \eqref{eq:hqf}, the cost  \eqref{eq:gtp1} can be rewritten as 
\begin{equation}
\textstyle \min_{x} \textstyle\sum_{ij} f_{ij}(x_{ij}) = 
\textstyle \min_{x,w} \textstyle\sum_{ij} \{ \omega_{ij} x^2_{ij} + \psi(\omega_{ij}) \}.
\end{equation}

\item On the algorithm convergence. Let $\Lcal$ denotes the Lagrangian of the half--quadratic transportation problem, then one can interchange the order of the minimisations; \emph{i.e.}, 
$
\min_{x,\omega }\max_{\bf y} \Lcal(x,\omega, {\bf y}) = \min_x \min_{\omega} \max_{\bf y}  \Lcal( \cdot )  = \min_\omega \min_{x} \max_{\bf y} \Lcal(\cdot); 
$ 
where we denote with $\bf y$ the Lagrange's multiplies vectors. This suggests an alternating minimisation scheme \emph{w.r.t.} $\omega$ and $(x, {\bf y}$). Let $x^k$, $\omega^{k}$ and ${\bf y}^{k}$ be the current feasible values, then we define 
\begin{equation}
	\label{eq:minomega}
	\omega^{k+1} = \Argmin{\omega} \Lcal( x^k,\omega, {\bf y}^{k} )
\end{equation}
to be the updated $\omega$ value. Thus, $x$ and ${\bf y}$ are updated by solving the quadratic transportation problem: 
\begin{equation}
	\label{eq:minhqtp}
	x^{k+1}, {\bf y}^{k+1} = \argmin{x} \argmax{{\bf y}} \Lcal( x,\omega^{k+1}, {\bf y}).
\end{equation}
We define $ F(x) = \sum_{ij} f_{ij}(x_{ij}) $ and $\hat F(x, \omega) = \sum_{ij} \{ \omega_{ij} x^2_{ij} + \psi(\omega_{ij}) \}$ and observe that $F(x^k) = \hat F(x^k, \omega^k) \ge \hat F(x^k, \omega^{k+1}) \ge \hat F(x^{k+1}, \omega^{k+1})  = F(x^{k+1})$. Then, the alternated minimisations \emph{w.r.t.} $\omega$ and $x$ produce a feasible convergent sequence $\{x^k, x^{k+1}, x^{k+2}, \ldots \}$  that reduces the cost of the GTP: $F(x^k) \ge F(x^{k+1}) \ge F(x^{k+2}) \ge \ldots$. 

\item On the alternated minimisations.  From \eqref{eq:omega}, the optimum $\omega$  in  \eqref{eq:minomega} is computed as
\begin{equation}
	 \omega_{ij} = f'_{ij}(x_{ij})/(2x_{ij}). \label {eq:kktomega}
\end{equation}
for a given $x$.  We define ${\bf y}$ equal to $(\lambda, \gamma, s)$ where $\lambda$ and $\gamma$ are the Lagrange's multipliers for the equality constraints \eqref{eq:kkth2} and \eqref{eq:kkth3}, respectively; and $s$ are the Lagrange's multipliers for the non--negativity constraint.  Then, the minimisation \eqref{eq:minhqtp} corresponds to finding the vectors  $(x, \lambda, \gamma, s)$ that solve the Karush-Kuhn-Tucker conditions (KKTs) with $\omega$ fixed:
\begin{eqnarray}
	 \omega_{ij} x_{ij}  - \lambda_i - \gamma_j  - s_{ij} &=& 0, \label{eq:kkth1}  \\
	\textstyle \sum_j  x_{ij}  - p_i  &=& 0,  \label {eq:kkth2} \\
	\textstyle  \sum_i  x_{ij}  -  q_j &=& 0,   \label {eq:kkth3} \\
	s_{ij} x_{ij} &=&0,  \label {eq:kkth4} \\
	s_{ij}, x_{ij} &\ge&0,  \label {eq:kkth5} 
\end{eqnarray}
A strategy for solving the KKTs is to use an iterative Projected Gauss--Seidel scheme \cite{morales:pgs08}. Thus, from  \eqref{eq:kkth1}:
\begin{equation}
	x_{ ij}  = \bar \omega_{ ij} \left(  \lambda_i + \gamma_j +s_{ ij} \right), \label{eq:xhq} 
\end{equation} 
where we define  $\bar \omega_{ ij} = 1/ \omega_{ ij}$. Substituting $x_{ij}$ in \eqref{eq:kkth2}, we have
\begin{equation}
	  \textstyle \sum_j \bar \omega_{ ij} \left(  \lambda_i + \gamma_j +s_{ ij} \right) = p_i.  \label {eq:sumkkt2} 
\end{equation}
We solve for $\lambda_i$ and obtain
\begin{equation}
	    \lambda_i  = \frac{ p_i - \sum_j (\gamma_j + s_{ij}){\bar \omega_{ ij}} } {\sum_j {\bar \omega_{ ij}} }. \label {eq:lambdahq}
\end{equation}
Similarly, we substitute $x_{ij}$ in \eqref {eq:kkth3} and solve for $\gamma_j$:
\begin{equation}
	   \gamma_j  = \frac{ q_j - \sum_i ( \lambda_i + s_{ij} ){\bar \omega_{ ij}} } {\sum_i {\bar \omega_{ ij}} }. \label {eq:gammahq}
\end{equation}
From \eqref{eq:kkth1}, \eqref{eq:kkth4} and \eqref {eq:kkth5}, we see two cases:  $x_{ij}=0$  and $ s_{ij} = -  ( \lambda_i + \gamma_j ) \ge 0$; or  $x_{ij}=\bar \omega_{ij} ( \lambda_i + \gamma_j ) \ge 0$ and $s_{ij} =0 $. Thus
\begin{equation}
	s_{ ij}  = \max \{0, -  ( \lambda_i + \gamma_j ) \}.
	   \label{eq:shq}
\end{equation}
The complete procedure is shown in Algorithm \ref{alg:algorithm2}. 
\end{enumerate}
\end{proof}

In practice, we observed that the internal loop in Algorithm  \ref{alg:algorithm2}  requires only a few iterarations to approximate the dual variables and is not necessary to achieve convergence; in our experiments we used five iterations. The solution is the global minima or a local one by depending if the cost is or is not convex. Following, we present two particular and interesting cases of half-quadratic transportation problems.  
 
\begin{prop}[Approximation for Linear Transportation]  \label{prop:l1} 
The linear cost can be approximated with the differential function \cite{charbonnier97}:
$
	f_{ij}(ij) =  c_{ij}  ({ x_{ij}^2 + \beta^2} ) ^{\frac{1}{2}}, 
$
with $\beta \approx 0$. Thus, $\omega$ is computed with
$
	 \textstyle \omega_{ij} = {\frac{1}{2}} { c_{ij}} / ({ x_{ij}^2 + \beta^2} ) ^{\frac{1}{2}}
$
 in Algorithm \ref{alg:algorithm2}.
\end{prop}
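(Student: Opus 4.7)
The plan is to verify that the proposed cost function $f_{ij}(t) = c_{ij}\sqrt{t^2+\beta^2}$ satisfies the five hypotheses of Proposition \ref{prop:fhq}, and then to read off the update rule for $\omega_{ij}$ directly from equation \eqref{eq:omega}.

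First I would write $f(t) = c\sqrt{t^2+\beta^2}$ (dropping the $ij$ subscripts) with $c>0$ and a fixed small $\beta>0$, and compute $f'(t) = ct/\sqrt{t^2+\beta^2}$. Then I would check the five conditions in order: (1) $f(0)=c\beta>-\infty$ and $f$ is increasing on $[0,\infty)$, so $f(t)\ge c\beta = m$; (2) $t^2+\beta^2>0$ everywhere, hence $f$ is $C^1$; (3) $f'(t)\ge 0$ for $t\ge 0$ by the explicit form; (4) $f'(t)/(2t) = c/[2\sqrt{t^2+\beta^2}] \to 0$ as $t\to +\infty$; and (5) the same expression tends to $c/(2\beta)$ as $t\to 0^+$, which is finite and strictly positive provided $\beta>0$, giving $M = c/(2\beta)$.

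Once the hypotheses are in place, Proposition \ref{prop:fhq} applies and gives both the existence of the dual function $\psi$ and the closed-form minimiser \eqref{eq:omega}. Substituting $f'(t) = ct/\sqrt{t^2+\beta^2}$ into $\omega^* = f'(t)/(2t)$ yields
\begin{equation*}
\omega^*_{ij} = \tfrac{1}{2}\, c_{ij}\bigl(x_{ij}^2 + \beta^2\bigr)^{-1/2},
\end{equation*}
which is exactly the formula claimed, to be used in line~10 of Algorithm~\ref{alg:algorithm2}.

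The only genuinely delicate point is condition~(5): with $\beta=0$ (the true $L_1$ cost) one gets $M=+\infty$ and the half-quadratic machinery breaks, so I would emphasise that $\beta>0$ is required and that the resulting cost is only an approximation (as already remarked with ``$\beta\approx 0$''). Everything else is routine substitution in the hypotheses of Proposition~\ref{prop:fhq} and in formula \eqref{eq:omega}; no new inequalities or limit arguments beyond those established in the cited work are needed.
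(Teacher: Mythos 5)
Your proposal is correct and follows essentially the same route as the paper: the paper's proof simply notes the limit $\lim_{\beta\to 0} c_{ij}(x_{ij}^2+\beta^2)^{1/2} = c_{ij}x_{ij}$ and then reads the $\omega$ formula directly off the closed-form minimiser $\omega_{ij} = f'_{ij}(x_{ij})/(2x_{ij})$, exactly as you do. Your explicit verification of the five hypotheses of the half-quadratic proposition (in particular that $M = c/(2\beta)$ is finite only for $\beta > 0$) is a detail the paper delegates to the cited reference, so your write-up is if anything slightly more complete.
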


\begin{proof}
It follows from 
\begin{equation}
	\label{eq:l}
	\textstyle \lim_{\beta \rightarrow 0} {c_{ij}  \left( x_{ij}^2 + \beta^2 \right)^{\frac{1}{2}} }= c_{ij} x_{ij} ; \; c_{ij}, x_{ij} \ge 0.
\end{equation}
Hence, the formula for $\omega$ follows directly from \eqref{eq:kktomega}.
\end{proof}


\begin{prop}[Approximation for $L_0$ Transportation] \label{prop:GM}
The cost function of the form $ f_{ij}(x_{ij}) = c_{ij}[1-\hat \delta(x_{ij})]$, for $c_{ij}, x_{ij} >0$ (where $\hat \delta(t)$ is the Kroneker's delta) can be approximated with the half-quadratic function:
 \begin{equation}
 	 \tilde f_{ij}(x_{ij}) =  c_{ij} x^2_{ij}/ (\beta^2 + x^2_{ij} ) 
 \end{equation}
 with $\beta \approx 0$. Thus, $ \omega_{ij} = { c_{ij} \beta^2 }/{(\beta^2 + x_{ij}^2)^2 }$.
 \end{prop}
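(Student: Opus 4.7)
The plan is to prove the two claims separately: first, that $\tilde f_{ij}$ converges pointwise to $c_{ij}[1-\hat\delta(x_{ij})]$ as $\beta \to 0$, which justifies calling it an approximation; and second, that the stated formula for $\omega_{ij}$ follows by substituting $\tilde f_{ij}$ into the closed form \eqref{eq:kktomega} from Proposition \ref{prop:fhq}.

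For the first claim, I would split into cases on $x_{ij}$. When $x_{ij} = 0$, we have $\tilde f_{ij}(0) = 0$ for every $\beta > 0$, which matches $c_{ij}[1-\hat\delta(0)] = 0$. When $x_{ij} > 0$, the ratio $x_{ij}^2/(\beta^2 + x_{ij}^2) \to 1$ as $\beta \to 0^+$, so $\tilde f_{ij}(x_{ij}) \to c_{ij} = c_{ij}[1-\hat\delta(x_{ij})]$. Combining the two cases yields the desired pointwise limit.

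For the second claim, I would first check that, for each fixed $\beta > 0$, $\tilde f_{ij}$ satisfies the five hypotheses of Proposition \ref{prop:fhq}, so that \eqref{eq:kktomega} is indeed applicable. A direct quotient-rule computation gives
\[
\tilde f'_{ij}(x_{ij}) = \frac{2 c_{ij} x_{ij} \beta^2}{(\beta^2 + x_{ij}^2)^2},
\]
which is non-negative, and the ratio $\tilde f'_{ij}(t)/(2t) = c_{ij}\beta^2/(\beta^2 + t^2)^2$ tends to $0$ as $t \to \infty$ and to the finite positive value $c_{ij}/\beta^2$ as $t \to 0^+$, so all hypotheses hold. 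Substituting into \eqref{eq:kktomega} then yields
\[
\omega_{ij} = \frac{\tilde f'_{ij}(x_{ij})}{2 x_{ij}} = \frac{c_{ij}\beta^2}{(\beta^2 + x_{ij}^2)^2},
\]
as claimed.

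The only delicate point is hypothesis (5) of Proposition \ref{prop:fhq}, which demands that the limiting ratio at zero be finite: this forces $\beta$ to be strictly positive, and also explains why the exact Kronecker-based cost cannot itself be handled by the half-quadratic framework but only approximated by $\tilde f_{ij}$ with $\beta \approx 0$. Apart from this observation, the argument is a mechanical differentiation.
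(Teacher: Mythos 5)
Your proposal is correct and follows essentially the same route as the paper: establish the pointwise limit $\lim_{\beta\to 0} t^2/(\beta^2+t^2) = 1-\hat\delta(t)$ and then obtain $\omega_{ij}$ by substituting $\tilde f'_{ij}(x_{ij}) = 2c_{ij}x_{ij}\beta^2/(\beta^2+x_{ij}^2)^2$ into \eqref{eq:kktomega}. Your explicit verification of the five hypotheses of Proposition \ref{prop:fhq} (with $M = c_{ij}/\beta^2$ finite only for $\beta>0$) is detail the paper leaves implicit, and your remark about why the exact Kronecker cost itself falls outside the framework is a sound observation, but the argument is the same one.
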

\begin{proof} 
It follows from 
\begin{equation}
	\label{eq:l0}
\lim_{\beta\rightarrow 0} \frac{t^2}{\beta^2+t^2}~=~1-\hat \delta(t)~=~\left\{\begin{array}{rcl} 0 &\mathrm{for}& t=0, \\  1 &\mathrm{for}& t\in\mathbb{R}\backslash\{0\}. \end{array}\right.
\end{equation}
The formula for $\omega$ follows directly from \eqref{eq:kktomega}.
\end{proof}

Figure \ref{fig:fig1} plots the half--quadratic functions that approximate the $L_1$ and $L_0$ norms.

\section{Relationship with the quadratic transportation problem}

\begin{prop}[Simple Quadratic Transport (SQT)] \label{prop:sqp} The SQT problem is defined by the cost function $f_{ij}(ij) = c_{ij} x_{ij}^2$; thus, $ \omega_{ij} = c_{ij}$.
\end{prop}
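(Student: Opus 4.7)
The plan is to observe that this proposition is essentially a direct specialisation of the formula \eqref{eq:kktomega} derived in the proof of the main half--quadratic transportation theorem. First I would compute the derivative of the quadratic cost $f_{ij}(t) = c_{ij} t^2$, obtaining $f'_{ij}(t) = 2 c_{ij} t$. Substituting into \eqref{eq:kktomega} gives $\omega_{ij} = f'_{ij}(x_{ij})/(2 x_{ij}) = 2 c_{ij} x_{ij} / (2 x_{ij}) = c_{ij}$ for any $x_{ij} > 0$, which is precisely the claim.

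Before concluding, I would briefly address the regularity conditions required by Proposition \ref{prop:fhq}. The quadratic $c_{ij} t^2$ trivially satisfies continuity, continuous differentiability, non--negativity, and monotonicity of $f'$, and the limit $\lim_{t\to 0^+} f'(t)/(2t) = c_{ij}$ is well defined, so one can take $M = c_{ij}$. The asymptotic condition $\lim_{t\to\infty} f'(t)/(2t) = 0$ is not met, but in the degenerate case $f'(t)/(2t)$ is already constant, so the auxiliary function $\psi$ is not actually needed: the representation $c_{ij} t^2 = \omega t^2 + \psi(\omega)$ collapses to a single point $\omega = c_{ij}$, $\psi(c_{ij}) = 0$, and the infimum in \eqref{eq:hqf} is attained trivially.

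Finally, I would remark on the algorithmic consequence, which is the real content of the proposition: because $\omega_{ij}$ is independent of $x_{ij}$, the outer loop of Algorithm \ref{alg:algorithm2} (the update \eqref{eq:kktomega}) is stationary from the initialisation $\omega_{ij} = c_{ij}$ onward. Hence Algorithm \ref{alg:algorithm2} reduces to a single pass of the inner primal--dual iteration on the quadratic transportation problem, recovering the classical SQT solver as a special case. The main (mild) obstacle is just the bookkeeping of the boundary condition in Proposition \ref{prop:fhq}; once that is handled, the derivation is a one--line application of \eqref{eq:kktomega}.
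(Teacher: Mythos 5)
Your proof takes essentially the same route as the paper, whose entire argument is the one-line substitution of $f'_{ij}(x_{ij}) = 2c_{ij}x_{ij}$ into \eqref{eq:kktomega}, yielding $\omega_{ij} = c_{ij}$. Your additional observations---that the quadratic cost violates the asymptotic hypothesis $\lim_{t\to\infty} f'(t)/(2t) = 0$ of Proposition \ref{prop:fhq} so the half--quadratic representation degenerates to the single point $\omega = c_{ij}$, $\psi(c_{ij}) = 0$, and that constancy of $\omega$ collapses the outer loop of Algorithm \ref{alg:algorithm2}---are correct and in fact more careful than the paper (the algorithmic consequence is stated there as a separate proposition), but they do not change the underlying argument.
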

\begin{proof} It follows directly from \eqref{eq:kktomega}.
\end{proof}
\begin{prop}[Simple Quadratic Transportation Algorithm] In the case of the QT problem, the Algorithm  \ref{alg:algorithm2}  is reduced to the dual Algorithm \ref{alg:algorithm1}.
\end{prop}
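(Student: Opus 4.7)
The plan is to argue that in the SQT case the outer $\omega$-update becomes trivial, so Algorithm \ref{alg:algorithm2} collapses to its inner loop, which by construction is a dual quadratic transport solver (Algorithm \ref{alg:algorithm1}).

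First, I would invoke Proposition \ref{prop:sqp}: for $f_{ij}(x_{ij})=c_{ij}x_{ij}^{2}$ the KKT expression \eqref{eq:kktomega} gives $\omega_{ij}=f'_{ij}(x_{ij})/(2x_{ij})=c_{ij}$, independently of $x_{ij}$. Since Algorithm \ref{alg:algorithm2} initialises $\omega_{ij}=c_{ij}$ and $\bar{\omega}_{ij}=1/c_{ij}$, each execution of line 10 (the update of $\omega$ via \eqref{eq:kktomega}) leaves $\omega$ fixed at its initial value. Consequently, the outer loop terminates after a single pass: every iterate produced by Algorithm \ref{alg:algorithm2} beyond the first is identical to the previous one.

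Second, I would read off what remains. With $\omega$ frozen at $c$, the only active updates are the inner loop equations \eqref{eq:shq}, \eqref{eq:lambdahq}, \eqref{eq:gammahq} followed by the primal reconstruction \eqref{eq:xhq}. Substituting $\bar{\omega}_{ij}=1/c_{ij}$ into these expressions yields precisely the Projected Gauss--Seidel iteration on the KKT system of the quadratic transportation problem $\min_x \sum_{ij}c_{ij}x_{ij}^{2}$ subject to \eqref{eq:gtp2}--\eqref{eq:gtp4}, which is exactly the dual scheme stated as Algorithm \ref{alg:algorithm1}. Hence the two algorithms produce the same sequence of iterates on SQT inputs.

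I do not anticipate a serious obstacle; the proof is essentially by inspection once the constancy of $\omega$ is observed. The only point requiring care is to note that the half-quadratic reformulation $\hat F(x,\omega)=\sum_{ij}\{\omega_{ij}x_{ij}^{2}+\psi(\omega_{ij})\}$ coincides, up to the additive constant $\sum_{ij}\psi(c_{ij})$, with the original SQT cost when $\omega\equiv c$; this constant does not affect the minimiser, so the minimisations carried out by Algorithm \ref{alg:algorithm1} and the inner loop of Algorithm \ref{alg:algorithm2} agree.
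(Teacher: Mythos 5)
Your proposal is correct and follows essentially the same route as the paper, whose one-line proof likewise rests on the observation from Proposition \ref{prop:sqp} that $\omega_{ij}=c_{ij}$ is constant, so the $\omega$-update is vacuous and the dual updates of $\lambda$, $\gamma$ and $s$ proceed independently of $x$. Your added remarks---that the outer loop stabilises after one pass and that the additive constant $\sum_{ij}\psi(c_{ij})$ does not affect the minimiser---are correct elaborations of the same argument rather than a different approach.
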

{\small
\begin{algorithm}[ht]
\caption{Simple quadratic transportation solver.}  \label{alg:algorithm1}
    \begin{algorithmic}[1]
    	\Require {$\sum_i q_i =\sum_j p_j$ and $c,p,q \ge 0;$ }  
	\Function {qTP} {$c, p,q$}
	\State  Initialise $\lambda = \gamma=1$ and use $\bar \omega_{i,j} = 1/ c_{ij}$
	\Repeat 
		\State  Update $s$ with \eqref{eq:shq};
		\State Update $\lambda$ with \eqref{eq:lambdahq};
		\State Update $\gamma$ with \eqref{eq:gammahq};
	\Until{convergence}
	\State Compute $x$ with \eqref{eq:xhq};
	\State \Return $[x, \lambda, \gamma]$;	
	\EndFunction
        \end{algorithmic}
\end{algorithm}
}
\begin{proof} From Proposition \ref{prop:sqp}, we note that $\omega$ is constant, and that the computations of  $\lambda$, $\gamma$ and $s$ are independent of $x$. 
\end{proof}

Dorigo and Tobler discussed the relationship between the QTP and the push--pull migration laws  implemented in Algorithm \ref{alg:algorithm1} \cite{dorigo83}.

\begin{prop}[Quadratic Transportation (QT)] \label{prop:qp} The QT is defined by a cost function of the form $ f_{ij}(ij) = a_{ij} x_{ij}^2 + b_{ij} x_{ij}$. Thus, the dual algorithm is derived with $\omega=c_{ij}$ and using the condition
\begin{equation}
	\omega_{ij} x_{ij}  - \lambda_i - \gamma_j  - s_{ij} = b_{ij} \label{eq:kkthqtp}
\end{equation}
instead of \eqref{eq:kkth1}. 
\end{prop}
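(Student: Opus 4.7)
The plan is to derive the KKT conditions for the stated quadratic cost and observe that the only change with respect to the simple quadratic case of Proposition~\ref{prop:sqp} is an additive $b_{ij}$ appearing in the stationarity equation. First I would write down the Lagrangian
\[\mathcal{L}(x,\lambda,\gamma,s) = \sum_{ij}\left( a_{ij} x_{ij}^2 + b_{ij} x_{ij} \right) - \sum_i \lambda_i \left( \sum_j x_{ij} - p_i \right) - \sum_j \gamma_j \left( \sum_i x_{ij} - q_j \right) - \sum_{ij} s_{ij} x_{ij},\]
with $s_{ij} \ge 0$, and compute the stationarity condition $\partial \mathcal{L}/\partial x_{ij} = 0$. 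This yields $2 a_{ij} x_{ij} + b_{ij} - \lambda_i - \gamma_j - s_{ij} = 0$, which, with the identification $\omega_{ij} = 2 a_{ij}$, is exactly \eqref{eq:kkthqtp}. Because $\omega_{ij}$ depends only on the fixed coefficient $a_{ij}$ and not on $x_{ij}$, the outer loop of Algorithm~\ref{alg:algorithm2} (the one that refreshes $\omega$ via \eqref{eq:kktomega}) collapses, exactly as in the reduction of Algorithm~\ref{alg:algorithm2} to Algorithm~\ref{alg:algorithm1}.

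Next I would verify that the remaining KKT blocks are undisturbed by the linear term $b_{ij} x_{ij}$: primal feasibility \eqref{eq:kkth2}--\eqref{eq:kkth3}, complementary slackness \eqref{eq:kkth4}, and the nonnegativity conditions \eqref{eq:kkth5} carry over unchanged. Consequently the same projected Gauss--Seidel strategy used in the proof governing Algorithm~\ref{alg:algorithm2} applies directly: solving \eqref{eq:kkthqtp} for $x_{ij}$ yields $x_{ij} = \bar\omega_{ij}\left(\lambda_i + \gamma_j + s_{ij} - b_{ij}\right)$; substituting into the supply and demand equations gives closed-form updates for $\lambda_i$ and $\gamma_j$ of the same shape as \eqref{eq:lambdahq}--\eqref{eq:gammahq}, merely shifted by the $b_{ij}$ term; and the two-branch complementary-slackness analysis yields $s_{ij} = \max\{0,\, b_{ij} - (\lambda_i + \gamma_j)\}$ in place of \eqref{eq:shq}.

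The main obstacle, such as it is, is pure bookkeeping: propagating the $b_{ij}$ shift consistently through the substitutions into the row and column constraints, and through the branch condition of the complementary-slackness case analysis (now reading $\bar\omega_{ij}(\lambda_i + \gamma_j - b_{ij}) \ge 0$ on the nonzero-$x$ branch versus $-(\lambda_i + \gamma_j - b_{ij}) \ge 0$ on the active-bound branch). No new convergence argument is required: the statement only claims that the dual algorithm is \emph{derived} with $\omega$ constant and with the modified stationarity equation; once these two changes are justified, the rest of Algorithm~\ref{alg:algorithm1} transfers over directly.
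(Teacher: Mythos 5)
Your route is the same as the paper's: the paper's entire proof of this proposition is the one line ``It follows directly from the KKTs,'' and your Lagrangian-plus-stationarity derivation, together with the observation that a constant $\omega$ collapses the outer loop exactly as in the reduction to Algorithm~\ref{alg:algorithm1}, is precisely that argument written out. There is, however, one concrete slip in your claim of exact agreement: your stationarity condition $2a_{ij}x_{ij} + b_{ij} - \lambda_i - \gamma_j - s_{ij} = 0$ rearranges to $\omega_{ij}x_{ij} - \lambda_i - \gamma_j - s_{ij} = -b_{ij}$ with $\omega_{ij} = 2a_{ij}$, which differs from \eqref{eq:kkthqtp} by the sign of $b_{ij}$; no sign convention on the multipliers can absorb this, since $\omega_{ij}x_{ij}$ and $b_{ij}$ both come from the gradient of $f_{ij}$ and must carry the same relative sign (the printed \eqref{eq:kkthqtp} instead corresponds to the cost $a_{ij}x_{ij}^2 - b_{ij}x_{ij}$). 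Your downstream formulas $x_{ij} = \bar\omega_{ij}(\lambda_i + \gamma_j + s_{ij} - b_{ij})$ and $s_{ij} = \max\{0,\, b_{ij} - (\lambda_i + \gamma_j)\}$ are consistent with your own derivation and give a correct algorithm, so the mismatch is best read as a sign typo (or an implicit sign convention on $b_{ij}$) in the paper --- but you should have flagged the discrepancy rather than asserting that your equation ``is exactly \eqref{eq:kkthqtp}.'' A second, harmless point of bookkeeping: the paper's \eqref{eq:kkth1} silently absorbs the factor $2$ from $\partial(\omega_{ij}x_{ij}^2)/\partial x_{ij}$ into rescaled multipliers, so your identification $\omega_{ij} = 2a_{ij}$ and the paper's $\omega = c_{ij}$ (with $c$ left undefined in the proposition) differ only by that rescaling, which affects none of the updates.
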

\begin{proof} It follows directly from the KKTs.
\end{proof}

\noindent \emph{Remark.} An alternative to Proposition \ref{prop:qp}  is given by the half--quadratic approximation $a_{ij} x_{ij}^2 + b_{ij} x_{ij} \approx a_{ij} x_{ij}^2 + 2 b_{ij} ({{x_{ij}^2 + \beta^2} })^{\frac{1}{2}}$, with $\beta \approx 0$; thus, $ \omega_{ij} = a_{ij} +  \frac{1}{2}{ b_{ij} }/({{x_{ij}^2 + \beta^2} })^{\frac{1}{2}}$.  This approximation is presented  with the sole aim of illustrating the potential of our approach. It is clear that the dual algorithm derived according to Proposition \ref{prop:qp} is more accurate, faster and requires less memory to be implemented. 
 
\begin{figure}[t]
\centerline{
\includegraphics[width=\linewidth]{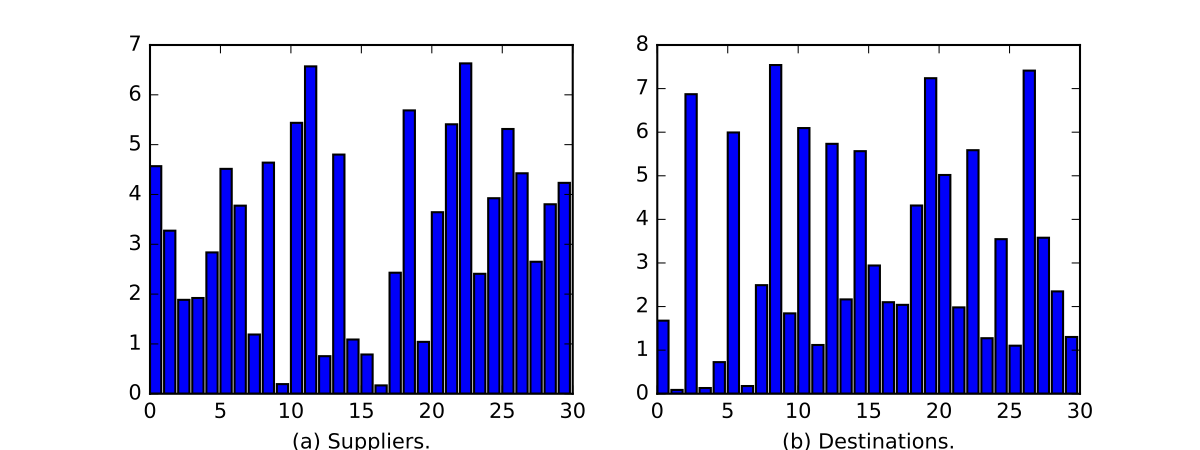}
}
\caption{Randomly generated levels of production, $p$, and demand, $q$.}
\label{fig:fig2}
\end{figure}

\begin{figure}[t]
\centerline{
\includegraphics[width=\linewidth]{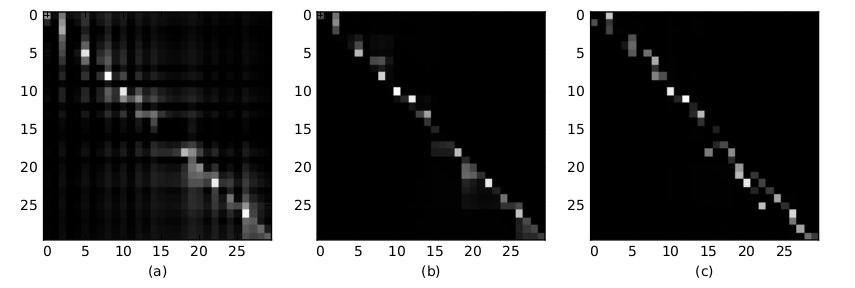}
}
\caption{Computed transport values $x$ (in grayscale) corresponding to (a) quadratic cost, (b) half--quadratic $L_1$--approximation and (c) half--quadratic $L_0$--approximation. The solution computed with the non--convex cost function ($L_0$--approximation) depicted in panel (c) is sparser.}
\label{fig:fig3}
\end{figure}


\section{Discussion and Conclusions}
The transportation problem is the base of the Earth Mover Distance which has become a relevant metric to for compare distributions in applications to data analysis and computer vision. 
The presented technique can motivate the design of new algorithms in those areas. 

In order to demonstrate the versatility of our proposal, we generate two random vectors $p$ and $q$ (depicted in Figure \ref{fig:fig2}); we compute the optimum transported volumes $x$ with three cost function models: the quadratic ($c_{ij}x_{ij}^2$), the approximation $L_1$ ($c_{ij}\sqrt{x_{ij}^2+\beta^2}$, with $\beta^2= 1\times10^{-3}$) and the approximation $L_0$ ($c_{ij}x_{ij}^2 / (\beta^2 + x_{ij}^2)$, with $\beta^2= 1\times10^{-1}$). In all the cases, we use $c_{ij} = |i-j|+1$. Figure \ref{fig:fig3} depicts the computed $x$ values. One can observe that the quadratic cost function promotes dense solutions; \emph{i.e.}, there are many $x$'s with small values. On the other hand, one can observe the sparseness of the solution is induced with the use of the approximated $L_1$--norm. Such sparsity is emphasised with the approximated $L_0$--norm. 

 We have presented a model to approximate solutions to the general transportation problems by approximating the transportation cost functions with half--quadratic functions. The approach guarantees convergence using an alternated minimisation scheme. In the case of a non--convex cost function $f$ the convergence is guaranteed to a local minimum. Although we present a minimisation algorithm with reduced memory requirements, our scheme accepts other efficient solvers for the quadratic transportation subproblem; such as those reported in Refs. \cite{adlakha13,megiddo93, cosares94}.
 










\begin{thebibliography}{10}
\expandafter\ifx\csname url\endcsname\relax
  \def\url#1{\texttt{#1}}\fi
\expandafter\ifx\csname urlprefix\endcsname\relax\def\urlprefix{URL }\fi
\expandafter\ifx\csname href\endcsname\relax
  \def\href#1#2{#2} \def\path#1{#1}\fi

\bibitem{Hitchcock41}
F.~L. Hitchcock, \href{http://dx.doi.org/10.1002/sapm1941201224}{The
  distribution of a product from several sources to numerous localities},
  Journal of Mathematics and Physics 20~(1-4) (1941) 224--230.

\bibitem{adlakha13}
V.~Adlakha, K.~Kowalski, On the quadratic transportation problem, Open Journal
  of Optimization 2~(3) (2013) 89--94.

\bibitem{gemanyang95}
D.~Geman, C.~Yang, \href{http://dx.doi.org/10.1109/83.392335}{Nonlinear image
  recovery with half-quadratic regularization}, Trans. Img. Proc. 4~(7) (1995)
  932--946.

\bibitem{charbonnier97}
P.~Charbonnier, L.~Blanc-Feraud, G.~Aubert, M.~Barluad, Deterministic
  edge-preserving regularization in computed imaging, {IEEE} Trans. Image
  Processing 6 (1997) 298--311.

\bibitem{rubner:EMD00}
Y.~Rubner, C.~Tomasi, L.~J. Guibas,
  \href{http://dx.doi.org/10.1023/A:1026543900054}{The earth mover's distance
  as a metric for image retrieval}, Int. J. Comput. Vision 40~(2) (2000)
  99--121.

\bibitem{baccianella13:emd}
S.~Baccianella, A.~Esuli, F.~Sebastiani,
  \href{http://dx.doi.org/10.1162/NECO_a_00558}{Feature selection for ordinal
  text classification}, Neural Computation 26~(3) (2013) 557--591.

\bibitem{levina:emdmallows01}
E.~Levina, P.~Bickel, The earth mover's distance is the mallows distance: some
  insights from statistics, in: Computer Vision, 2001. ICCV 2001. Proceedings.
  Eighth IEEE International Conference on, Vol.~2, 2001, pp. 251--256 vol.2.

\bibitem{kusner:emd15}
M.~Kusner, Y.~Sun, N.~Kolkin, K.~Q. Weinberger,
  \href{http://jmlr.org/proceedings/papers/v37/kusnerb15.pdf}{From word
  embeddings to document distances}, in: D.~Blei, F.~Bach (Eds.), Proceedings
  of the 32nd International Conference on Machine Learning (ICML-15), JMLR
  Workshop and Conference Proceedings, 2015, pp. 957--966.

\bibitem{zen:emd14}
G.~Zen, E.~Ricci, N.~Sebe,
  \href{http://dx.doi.org/10.1109/ICPR.2014.634}{Simultaneous ground metric
  learning and matrix factorization with earth mover's distance}, in:
  Proceedings of the 2014 22Nd International Conference on Pattern Recognition,
  ICPR '14, IEEE Computer Society, Washington, DC, USA, 2014, pp. 3690--3695.

\bibitem{orlova:emd16}
D.~Y. Orlova, N.~Zimmerman, S.~Meehan, C.~Meehan, J.~Waters, E.~E.~B. Ghosn,
  A.~Filatenkov, G.~A. Kolyagin, Y.~Gernez, S.~Tsuda, W.~Moore, R.~B. Moss,
  L.~A. Herzenberg, G.~Walther,
  \href{http://dx.doi.org/10.1371%2Fjournal.pone.0151859}{Earth mover's
  distance ({EMD}): A true metric for comparing biomarker expression levels in
  cell populations}, PLOS ONE 11~(3) (2016) 1--14.

\bibitem{cuturi14}
M.~Cuturi, D.~Avis,
  \href{http://dl.acm.org/citation.cfm?id=2627435.2627452}{Ground metric
  learning}, J. Mach. Learn. Res. 15~(1) (2014) 533--564.

\bibitem{morales:pgs08}
J.~L. Morales, J.~Nocedal, M.~Smelyanskiy,
  \href{http://dx.doi.org/10.1007/s00211-008-0183-5}{An algorithm for the fast
  solution of symmetric linear complementarity problems}, Numerische Mathematik
  111~(2) (2008) 251--266.

\bibitem{dorigo83}
G.~Dorigo, W.~Tobler,
  \href{http://dx.doi.org/10.1111/j.1467-8306.1983.tb01392.x}{Push-pull
  migration laws}, Annals of the Association of American Geographers 73~(1)
  (1983) 1--17.

\bibitem{megiddo93}
N.~Megiddo, A.~Tamir,
  \href{http://dx.doi.org/10.1016/0167-6377(93)90041-E}{Linear time algorithms
  for some separable quadratic programming problems}, Oper. Res. Lett. 13~(4)
  (1993) 203--211.

\bibitem{cosares94}
S.~Cosares, D.~S. Hochbaum,
  \href{http://dx.doi.org/10.1287/moor.19.1.94}{Strongly polynomial algorithms
  for the quadratic transportation problem with a fixed number of sources},
  Math. Oper. Res. 19~(1) (1994) 94--111.
  
\end{thebibliography}

\end{document}